\documentclass[a4paper,12pt]{article}
\usepackage[margin=1in]{geometry}
\usepackage{amsmath,amsthm,amssymb,amsfonts}
\usepackage{hyperref}
\usepackage{xurl}

\newtheorem{proposition}{Proposition}
\newtheorem{theorem}{Theorem}

\newtheorem{corollary}{Corollary}

\DeclareMathOperator{\Var}{Var}

\title{A direct injection for the strong $q$-log-convexity of Touchard polynomials}
\author{Vuong Bui\thanks{\texttt{bui.vuong@yandex.ru}}}
\date{}

\begin{document}

\maketitle

\begin{abstract}
	We provide a direct injection for the well-known strong log-convexity of the Bell numbers $B_n$, that is $B_mB_n\le B_{m-1}B_{n+1}$ for every $1\le m\le n$. Our injection $\Pi_m\times\Pi_n\to \Pi_{m-1}\times\Pi_{n+1}$, where $\Pi_n$ denotes the set of all partitions of $[n]$, preserves the total number of blocks in the pair of partitions. In other words, it is also an injection for the strong $q$-log-convexity of Touchard polynomials, a result established by Chen, Wang, and Yang using analytical arguments. As an application of the injection, we also recover a related result of Chern, Diaconis, Kane, and Rhoades. 
\end{abstract}

\section{Introduction}

Let $B_n$ denote the Bell numbers, that is the number of partitions of $[n]$. One can check the sequence, numbered $A000110$, on The On-Line Encyclopedia of Integer Sequences \cite{OEIS_A000110} for more information. It is well known that $(B_n)^2\le B_{n-1}B_{n+1}$ for every $n\ge 1$. This so-called log-convexity of $B_n$ was first proved by Engel \cite{engel1994average} in 1994, with a more combinatorial proof later by Canfield \cite{canfield1995engel} in 1995. Although the latter proof uses some combinatorial insight through the number of blocks in each partition, the main step still involves analysis with some elementary inequalities, mostly of Cauchy--Schwarz type. Therefore, we may wonder if we could make a direct injection from a pair of partitions of $[n]$ to a pair of partitions, the first of $[n-1]$ and the second of $[n+1]$. A direct strategy would be to remove the element $n$ in the first partition and add the element $n+1$ to the second partition. However, the delicate part is how to keep track of the block from which the element $n$ is removed and the block to which the element $n+1$ is added. When studying the log-convexity of combinatorial sequences, Liu and Wang \cite{liu2007log} posed the following problem in 2007, after the log-convexity of the Bell numbers had been established by several different methods:
\begin{quote}
An intriguing problem is to find a combinatorial interpretation for the log-convexity of the Bell numbers.
\end{quote}
Sun and Wang subsequently gave a combinatorial proof using weighted Motzkin paths \cite{sun2014combinatorial}. They further observed that it would be interesting to construct a weight-nondecreasing injection between the corresponding pairs of weighted paths. More recently, Chen, Wang, and Zheng \cite{chen2021combinatorial} obtained an injective proof formulated in terms of weighted \L{}ukasiewicz paths. In contrast to these path-based approaches, the present article constructs an explicit injection directly on pairs of set partitions and preserves the total number of blocks across the two partitions.
The problem has also attracted broader interest, as evidenced by a related discussion on Mathematics Stack Exchange.\footnote{\url{https://math.stackexchange.com/questions/67988/an-inequality-involving-bell-numbers-b-n2-leq-b-n-1b-n1}}

The intriguing part may be due to the following question: if we can move an element from one partition to the other, what prevents us from reversing the procedure for a pair of partitions of $[n-1]$ and $[n+1]$? One may argue that what we really work on is the blocks, not the numbers of elements $n-1,n,n+1$. It turns out that, as one can see in the proof later, the reason we cannot do it the other way around is partly the symmetry: we may exchange the order of the two partitions with the same total number of elements $n$ when necessary.\footnote{In fact, when looking at the proof, one can see that the reason becomes more involved when we extend the strategy to cover the strong $q$-log-convexity. The symmetry is our original motivation, however.}

In this article, we assume that partitions are sorted in the increasing order of the smallest element of each block. That is, if $P=\{P_1,\dots,P_s\}$, then $\min P_1\le\dots\le \min P_s$. This order is standard in representing partitions, but it proves to be particularly useful in our case.

While Bell numbers only involve the number of partitions, Touchard polynomials keep track of the number of blocks in each partition. In particular, Touchard polynomials $T_n$ are defined by
\[
	T_n(q) = \sum_{k=0}^n \left\{ {n \atop k} \right\} q^k.
\]
The $q$-log-convexity of Touchard polynomials is the property that
\[
	[T_n(q)]^2\le_q T_{n-1}(q) T_{n+1}(q),
\]
where $P_1(q) \le_q P_2(q)$ for two polynomials $P_1(q),P_2(q)$ means that the coefficients of $P_2(q)-P_1(q)$ are all nonnegative. This property was proved by Liu and Wang \cite{liu2007log}.

By strong log-convexity for Bell numbers, we mean the property that
\[
	B_mB_n\le B_{m-1}B_{n+1}
\]
for any $1\le m\le n$.
Likewise, the strong $q$-log-convexity of Touchard polynomials is defined by
\begin{equation}\label{eq:strong-q}
	T_m(q)T_n(q)\le_q T_{m-1}(q)T_{n+1}(q)
\end{equation}
for any $1\le m\le n$.
This property was proved in \cite{chen2011recurrence} by Chen, Wang, and Yang in 2011.

While the strong log-convexity and the log-convexity for any positive sequence can be shown to be equivalent by simple arithmetic operations, we do not have the equivalence for the strong $q$-log-convexity and the $q$-log-convexity in general \cite{chen2011recurrence} (for a simple counterexample, see \cite{chen2010schur}). 
To prove \eqref{eq:strong-q} in a combinatorial way, it suffices to prove the following version.

\begin{theorem}
\label{thm:strong-q}
	Let $\Pi_n$ denote the set of all partitions of $[n]$. For every $1\le m\le n$, there exists an explicit injection $\Pi_m\times\Pi_n \to \Pi_{m-1}\times \Pi_{n+1}$ so that the total number of blocks in the two partitions is preserved.
\end{theorem}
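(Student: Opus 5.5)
The plan is to build the map by \emph{moving one element}: delete $m$ from $P\in\Pi_m$ and insert $n+1$ into $Q\in\Pi_n$. The block order fixed above (blocks sorted by their minima) gives a canonical way to record which block $m$ came from. Write $k(P)$ for the number of blocks of $P$.

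\textbf{Main case.} Let $P=\{P_1,\dots,P_s\}$ and $Q=\{Q_1,\dots,Q_t\}$.
\begin{itemize}
\item If $\{m\}$ is a singleton block of $P$, delete it and add $\{n+1\}$ as a new singleton block of $Q$. The total number of blocks is unchanged.
\item If $m\in P_i$ with $|P_i|\ge 2$, then $m$ is not the minimum of $P_i$, because it is the largest element of $[m]$. So deleting $m$ leaves $s$ blocks in the same order, and we can insert $n+1$ into $Q_i$, provided $i\le t$. Since $n+1$ is never the minimum of a block, the order of $Q$'s blocks is also unchanged.
\end{itemize}
This map is reversible. From the image $(P',Q')$, look at $n+1$. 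If it is a singleton, restore $\{m\}$ as a singleton of $P'$. If $n+1$ lies in the $j$-th block of $Q'$, put $m$ back into the $j$-th block of $P'$; the sorted order of $P'$ agrees with that of $P$. Hence the map is injective on its domain. Its image consists exactly of the pairs in which $n+1$ is a singleton, or lies in block $j$ of $Q'$ with $j\le k(P')$.

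\textbf{Remaining pairs.} The map is undefined on the \emph{bad} pairs, where $m\in P_i$ is non-singleton and $i>t=k(Q)$. For these, $k(P)>k(Q)$. The unused targets are the pairs $(P',Q')$ in which $n+1$ lies in block $j$ of $Q'$ with $j>k(P')$. These pairs are lopsided in the opposite direction. The hypothesis $m\le n$ must be used here; this is where the paper's symmetry remark enters. Swapping $P$ and $Q$ when $m=n$ turns a bad pair into a good one.

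For general $m\le n$, I would imitate that swap on a common prefix. Find a cut $c\le m-1$ at which the restrictions $P|_{[c]}$ and $Q|_{[c]}$ have matching numbers of open blocks. Exchange the two prefixes, keeping each suffix attached to its own open blocks by rank in the sorted order. Then apply the main-case move to the resulting pair. The exchange is meant to turn ``$i>k(Q)$'' into ``$n+1$ lands in a block of index $>k(P')$''.

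\textbf{Verification and main obstacle.} I would choose the cut $c$ canonically, for example as the last index where the block counts of the two prefixes cross. Then I would check three things: the move preserves $k(P)+k(Q)$, the image lies in the unused region, and $c$ can be recovered from the image, so the two pieces have disjoint images and are each injective. This exchange step is the main obstacle. Preserving the total block count and keeping the images disjoint must hold simultaneously. If the single-swap idea fails, I would fall back on a recursive definition: apply the construction for smaller pairs $(m',n')$ to the prefixes and argue injectivity by induction on $m$.
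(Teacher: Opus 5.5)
Your main case is exactly the paper's: delete a singleton $\{m\}$ and append $\{n+1\}$, or otherwise move $m$ out of $P_i$ and $n+1$ into $Q_i$ when $i\le t$. Your description of its image is also correct. The gap is the bad case $i>t$, which is the real content of the theorem, and the mechanism you sketch for it cannot work.

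Your exchange produces a pair $(\tilde P,\tilde Q)\in\Pi_m\times\Pi_n$. If the main-case move applies to it, then $(\tilde P,\tilde Q)$ is itself a good pair, and the main map already sends it to the same target. So $(P,Q)$ and $(\tilde P,\tilde Q)$ collide. Put differently, the main-case move always places $n+1$ either as a singleton or in a block of index $i\le k(P)=k(P')$. It can therefore never reach the unused region $j>k(P')$ that you correctly identify as the destination. Your $m=n$ remark has the same defect: $(P,Q)$ and $(Q,P)$ would both be sent to the image of $(Q,P)$.

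In addition, take the natural reading of your cut condition, $k(P|_{[c]})=k(Q|_{[c]})$. Swapping prefixes and reattaching suffixes by rank then preserves $k(P)$, $k(Q)$ and the index $i$ of the block of $m$. So a bad pair stays bad, and the exchange accomplishes nothing. The recursive fallback is not specified enough to count as an argument.

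The missing idea is that in the bad case you should not reuse the main-case move. The cut should be the fixed one $c=m-1$, where the block counts deliberately do \emph{not} match: $k(Q|_{[m-1]})\le t<i\le s=k(P|_{[m-1]})$. The paper's construction is as follows.
\begin{itemize}
\item Rename $m$ to $n+1$ inside $P_i$, so $n+1$ sits at index $i>t$.
\item Move each $x\in\{m,\dots,n\}$ from its block $Q_j$ to $P_j$ whenever $Q_j$ also contains an element smaller than $m$. This is legitimate because then $j\le t<s$.
\item Append the blocks of $Q$ contained in $\{m,\dots,n\}$ to the end of $P$.
\item Swap roles: $P'=Q$ and $Q'=P$.
\end{itemize}
This is where $m\le n$ enters, since $P'$ is $Q$ with the tail $\{m,\dots,n\}$ stripped off. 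Then $k(P')\le t<i$, so the image lies in the region disjoint from the main case, and the total number of blocks is preserved. The map is invertible because every block of $Q'$ coming from $P$ contains an element smaller than $m$. This identifies both the grafted elements, which lie in blocks of index at most $k(P')$, and the appended tail blocks.
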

In order to prove Theorem \ref{thm:strong-q}, we reduce it to a slightly more convenient form in Theorem \ref{thm:marked} as a way to relate it with the proof in \cite{chen2011recurrence}.
In particular, the proof in \cite{chen2011recurrence}, which is more analytical, gives the following intermediate step.\footnote{The formula appears at the end of the proof of \cite[Theorem $2.4$]{chen2011recurrence}, rewritten for Touchard polynomials.}
\begin{proposition}[Chen-Wang-Yang 2011]
\label{prop:quant-strong-q}
	For every $1\le m\le n$,
	\[
		T_{m-1}(q)T_{n+1}(q)-T_m(q)T_n(q) = q(T_{m-1}(q)T_n'(q)-T_{m-1}'(q)T_n(q)).
	\]
\end{proposition}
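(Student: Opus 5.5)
The identity should follow directly from the standard derivative recurrence for Touchard polynomials,
\[
	T_{k+1}(q) = q\bigl(T_k(q) + T_k'(q)\bigr), \qquad k\ge 0,
\]
applied once to $T_{n+1}$ and once to $T_m$. No analytic machinery beyond this recurrence is needed.

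\textbf{Step 1: derive the recurrence.} Start from the Stirling recurrence
\[
	\left\{ {k+1 \atop j} \right\} = j\left\{ {k \atop j} \right\} + \left\{ {k \atop j-1} \right\}.
\]
Combinatorially, the element $k+1$ either joins one of the $j$ blocks of a partition of $[k]$ into $j$ blocks, or forms a new singleton block. Multiply by $q^j$ and sum over $j$. The term $\sum_j j\left\{ {k \atop j} \right\} q^j$ equals $qT_k'(q)$. The term $\sum_j \left\{ {k \atop j-1} \right\} q^j$ equals $qT_k(q)$. Boundary terms vanish because $\left\{ {k \atop j} \right\}=0$ outside $0\le j\le k$.

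\textbf{Step 2: substitute.} Since $m\ge 1$, the index $m-1\ge 0$, so the recurrence applies with $k=m-1$ and with $k=n$. It gives $T_m = q(T_{m-1}+T_{m-1}')$ and $T_{n+1} = q(T_n+T_n')$. Then
\[
	T_{m-1}T_{n+1}-T_mT_n = qT_{m-1}(T_n+T_n') - q(T_{m-1}+T_{m-1}')T_n .
\]
The terms $qT_{m-1}T_n$ cancel, leaving exactly $q(T_{m-1}T_n' - T_{m-1}'T_n)$.

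\textbf{Main obstacle.} There is essentially none. The only care needed is at the boundary $m=1$, where $T_0=1$ and $T_0'=0$, and this case is covered by the recurrence with $k=0$. The hypothesis $m\le n$ is not used for the identity itself. It becomes relevant only later, when showing that the right-hand side has nonnegative coefficients, which is the combinatorial content Theorem \ref{thm:strong-q} requires. For that later step, I would read $qT_k'$ as counting partitions of $[k]$ with one marked block, with $q$ weighting the blocks. The right-hand side then compares pairs with a marked block in the second partition against pairs with a marked block in the first. This suggests the reformulation as Theorem \ref{thm:marked}.
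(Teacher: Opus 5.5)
Your proof is correct. The recurrence $T_{k+1}=q(T_k+T_k')$ holds for all $k\ge 0$, including $T_1=q(T_0+T_0')=q$. Substituting it for $T_{n+1}$ and $T_m$ cancels the $qT_{m-1}T_n$ terms exactly as you say.

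The paper proves the identity bijectively instead, but both proofs rest on the same decomposition:
\begin{itemize}
\item Your recurrence is the generating-function form of $\Pi_{k+1}=\Pi'_{k+1}\sqcup\Pi^*_{k+1}$. Either $k+1$ joins an existing block, which contributes $qT_k'$, or it forms the singleton $\{k+1\}$, which contributes $qT_k$.
\item Your cancellation of $qT_{m-1}T_n$ is the paper's block-preserving bijection $\Pi^*_m\times\Pi_n\leftrightarrow\Pi_{m-1}\times\Pi^*_{n+1}$. It deletes the singleton $\{m\}$ from the first partition and appends $\{n+1\}$ to the second.
\end{itemize}
The paper glues this map to the identity on the remaining parts. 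This gives a bijection $(\Pi_m\times\Pi_n)\sqcup(\Pi_{m-1}\times\Pi'_{n+1})\leftrightarrow(\Pi'_m\times\Pi_n)\sqcup(\Pi_{m-1}\times\Pi_{n+1})$.

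Your algebraic version is shorter and fully sufficient for the polynomial identity. The bijective version adds an explicit map on $\Pi^*_m\times\Pi_n$, which the paper reuses in the corollary. There it is combined with the injection of Theorem~\ref{thm:marked} to build the explicit block-preserving injection $\Pi_m\times\Pi_n\to\Pi_{m-1}\times\Pi_{n+1}$ of Theorem~\ref{thm:strong-q}. A generating-function identity by itself supplies only the equality of coefficients, not that map, although your combinatorial reading of the recurrence makes it easy to recover.

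Your remark that $m\le n$ is not needed for the identity is also right. It applies equally to the paper's bijection.
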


After that, the proof in \cite{chen2011recurrence} shows that the right hand side has only nonnegative coefficients.

One should note that the sequence for 
\[
	qT'_{n-1}(q) = \sum_{k=0}^{n-1} k\left\{ {n-1 \atop k} \right\} q^k
\]
is almost the same as the one for $T_n(q)$, except that we require the block containing $n$ to contain another element as well. This is due to the multiplier $k$, which corresponds to the number of ways to insert $n$ into a block of a partition of $[n-1]$. We denote by $\Pi'_n$ the set of all such partitions of $[n]$. Also, we denote by $\Pi^*_n$ the set of all partitions with the last block being precisely $\{n\}$ (whose cardinality is $B_{n-1}$). Obviously,
\[
	\Pi_n = \Pi'_n\sqcup \Pi^*_n.
\]

We can prove Proposition \ref{prop:quant-strong-q} quite easily with a bijection.
\begin{proof}[Bijective proof of Proposition \ref{prop:quant-strong-q}]
	It suffices to give a bijection
	\begin{equation}\label{eq:bijection}
		(\Pi_m\times \Pi_n)\sqcup (\Pi_{m-1}\times \Pi'_{n+1}) \longleftrightarrow (\Pi'_{m}\times \Pi_n) \sqcup (\Pi_{m-1}\times \Pi_{n+1})
	\end{equation}
	that preserves the total number of blocks.
	Firstly, we have a simple bijection
	\[
		\Pi^*_m\times \Pi_n \leftrightarrow \Pi_{m-1}\times \Pi^*_{n+1},
	\]
	by removing the block $\{m\}$ from $P$ and appending $\{n+1\}$ to $Q$, for a pair $(P,Q)$ on the left hand side, to obtain a unique pair on the right hand side (the reverse direction can be done likewise). The bijection preserves the total number of blocks.

	Taking the disjoint union of the previous bijection with the following obvious bijection side by side,
	\[
		(\Pi'_m\times \Pi_n)\sqcup (\Pi_{m-1}\times \Pi'_{n+1}) \longleftrightarrow (\Pi'_m\times \Pi_n) \sqcup (\Pi_{m-1}\times \Pi'_{n+1}),
	\]
    we obtain the desired bijection \eqref{eq:bijection},
	where $(\Pi^*_m\times \Pi_n) \sqcup (\Pi'_m\times \Pi_n) = \Pi_m\times \Pi_n$ and $(\Pi_{m-1}\times \Pi^*_{n+1}) \sqcup  (\Pi_{m-1}\times \Pi'_{n+1}) = \Pi_{m-1}\times \Pi_{n+1}$. The conclusion follows.
\end{proof}

Given Proposition \ref{prop:quant-strong-q}, in order to prove the strong $q$-log-convexity of the Touchard polynomials, it remains to prove that its right hand side has only nonnegative coefficients.
\begin{theorem}[Chen-Wang-Yang 2011] \label{thm:nonnegative}
	For every $1\le m\le n$,
	\[
		q(T_{m-1}(q)T_n'(q)-T_{m-1}'(q)T_n(q)) \ge_q 0.
	\]
\end{theorem}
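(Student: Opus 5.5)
We compare the coefficient of $q^K$ on both sides for each fixed $K$. Differentiating termwise, $qT_n'(q)=\sum_j j\left\{ {n \atop j} \right\}q^j$, so the coefficient of $q^K$ in $q(T_{m-1}T_n'-T_{m-1}'T_n)$ is
\[
	c_K=\sum_{i+j=K}(j-i)\left\{ {m-1 \atop i} \right\}\left\{ {n \atop j} \right\}.
\]
We pair the term $(i,j)$ with the term $(j,i)$ and discard the diagonal $i=j$, whose weight is $0$. This gives
\[
	c_K=\sum_{\substack{i+j=K\\ i<j}}(j-i)\left(\left\{ {m-1 \atop i} \right\}\left\{ {n \atop j} \right\}-\left\{ {m-1 \atop j} \right\}\left\{ {n \atop i} \right\}\right).
\]
So it suffices to prove the following ratio monotonicity. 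For $a<b$ and $i<j$,
\[
	\left\{ {a \atop i} \right\}\left\{ {b \atop j} \right\}\ \ge\ \left\{ {a \atop j} \right\}\left\{ {b \atop i} \right\}. \tag{$\ast$}
\]
We apply it with $a=m-1<n=b$, which is where the hypothesis $m\le n$ enters.

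To prove $(\ast)$ we reduce to adjacent indices, $b=a+1$ and $j=i+1$, and telescope. Where all entries are positive, $(\ast)$ says that the ratio $\left\{ {b \atop j} \right\}/\left\{ {b \atop i} \right\}$ is nondecreasing in $b$. It also says that the ratio $\left\{ {a \atop i+1} \right\}/\left\{ {a \atop i} \right\}$ multiplies up across consecutive $i$. Hence the adjacent case implies the general case.

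Zero entries need separate treatment; we expect this bookkeeping, rather than the main inequality, to be the only fiddly point. The entry $\left\{ {a \atop j} \right\}$ vanishes when $j>a$, and then the right side of $(\ast)$ is $0$ and $(\ast)$ is trivial. Otherwise positivity holds throughout the relevant range $1\le i<j\le a<b$. We must also handle $i=0$, where $\left\{ {b \atop 0} \right\}=0$ for $b\ge1$. In that case the right side vanishes, except possibly when $a=0$, and then $\left\{ {0 \atop j} \right\}=0$ for $j>0$.

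For the adjacent case we use the recurrence $\left\{ {a+1 \atop k} \right\}=k\left\{ {a \atop k} \right\}+\left\{ {a \atop k-1} \right\}$. Writing $S_k=\left\{ {a \atop k} \right\}$, the inequality $S_k\left\{ {a+1 \atop k+1} \right\}\ge S_{k+1}\left\{ {a+1 \atop k} \right\}$ becomes
\[
	S_k\bigl((k+1)S_{k+1}+S_k\bigr)\ge S_{k+1}\bigl(kS_k+S_{k-1}\bigr).
\]
This simplifies to $S_kS_{k+1}+S_k^2\ge S_{k+1}S_{k-1}$. That inequality follows from the log-concavity $S_k^2\ge S_{k-1}S_{k+1}$ of the Stirling numbers of the second kind in $k$, which holds because $T_a$ is real-rooted. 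The remaining term $S_kS_{k+1}$ is nonnegative, so we even have slack.

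We expect this log-concavity input to be the main obstacle, since real-rootedness is analytic. To stay within the combinatorial spirit of the paper, we would first try to replace it by a direct injection. The target would be a map $\Pi_a^{(k-1)}\times\Pi_a^{(k+1)}\to\Pi_a^{(k)}\times\Pi_a^{(k)}$ (superscripts indicating the number of blocks), built by moving a block-ending element across in the spirit of the bijection for Proposition~\ref{prop:quant-strong-q}. If that fails, we would cite the classical log-concavity and keep the rest of the argument as above.
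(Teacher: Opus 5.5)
Your proof is correct, and it takes a genuinely different route from the paper.

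The paper never looks at individual coefficients. It derives the statement from Theorem~\ref{thm:marked}, an explicit injection $\Pi'_m\times\Pi_n\to\Pi_{m-1}\times\Pi'_{n+1}$ that preserves the total number of blocks. Since $\Pi'_m$ and $\Pi'_{n+1}$ are enumerated by $qT'_{m-1}(q)$ and $qT'_n(q)$, the inequality $qT'_{m-1}T_n\le_q T_{m-1}\,qT'_n$ follows at once. When $i\le t$, the injection deletes $m$ from $P_i$ and adds $n+1$ to $Q_i$. When $i>t$, it renames $m$ to $n+1$, shifts $m,\dots,n$ from $Q$ into $P$, and swaps the two partitions. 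It uses nothing beyond the ordering of blocks by their minima.

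You instead antisymmetrize $c_K$ and reduce to the $2\times 2$ minors $(\ast)$ of the Stirling triangle, i.e.\ total positivity of order two. You then reduce to adjacent minors via the recurrence, and finally to log-concavity in $k$. Each step checks out:
\begin{itemize}
\item the pairing of $(i,j)$ with $(j,i)$;
\item the zero bookkeeping: once $1\le i<j\le a<b$ all four entries are positive, and otherwise the right side of $(\ast)$ vanishes;
\item the telescoping through positive intermediate entries;
\item the simplification to $S_kS_{k+1}+S_k^2\ge S_{k+1}S_{k-1}$.
\end{itemize}

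\textbf{What your route buys.} It gives a finer, termwise statement: each antisymmetric pair $(i,j)$ contributes nonnegatively to $c_K$. The paper's injection does not directly give this, because in the case $i>t$ it preserves only the total number of blocks, not the two individual counts. Your reduction from $c_K$ to $(\ast)$ also applies verbatim to any array with nonnegative $2\times 2$ minors.

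\textbf{What it costs.} It needs the external input of log-concavity of $\left\{ {a \atop k} \right\}$ in $k$, via real-rootedness of $T_a$ plus Newton's inequalities or a separate injective proof. So it is neither self-contained nor bijective. It also produces no map whose image can be analyzed, whereas the paper reuses the image of its injection at $m=n$ to recover Theorem~\ref{thm:quant-log-convex}.

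A minor point: in the case $i=0$, your caveat ``except possibly when $a=0$'' is unnecessary. Since $b>a\ge 0$, we already have $\left\{ {b \atop 0} \right\}=0$.
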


We have the following combinatorial version of Theorem \ref{thm:nonnegative}.

\begin{theorem}
\label{thm:marked}
	For any $1\le m\le n$, there exists an explicit injection $\Pi'_m\times \Pi_n\to \Pi_{m-1}\times \Pi'_{n+1}$ that preserves the total number of blocks in the two partitions.
\end{theorem}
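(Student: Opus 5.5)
We will encode both sides by ``marked'' pairs and then give the injection as a two-case rule. For $A\in\Pi_{m-1}$ write $|A|$ for its number of blocks, and remember that blocks are ordered by their minima. A partition $P\in\Pi'_m$ is the same as a pair $(A,i)$ with $A=P\setminus\{m\}\in\Pi_{m-1}$ and $1\le i\le |A|$, where $i$ is the index of the block of $A$ that contained $m$. This uses that the block of $m$ has another element, so $|P|=|A|$. Likewise, an element of $\Pi'_{n+1}$ is a pair $(B',j)$ with $B'\in\Pi_n$ and $1\le j\le |B'|$, and it has $|B'|$ blocks. So it suffices to construct an injection
\[
\{(A,i,B): A\in\Pi_{m-1},\,B\in\Pi_n,\,i\le |A|\}\to\{(A',B',j): A'\in\Pi_{m-1},\,B'\in\Pi_n,\,j\le |B'|\}
\]
with $|A|+|B|=|A'|+|B'|$.

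\emph{Good case} $i\le |B|$. Send $(A,i,B)\mapsto (A,B,i)$; this trivially preserves the block count.

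\emph{Bad case} $i>|B|$. Here we use a ``swap of initial segments'', which is where $m-1\le n$ enters. Write $B_r=B|_{[m-1]}$ for the restriction of $B$ to $[m-1]$. Encode $B$ as $B_r$ together with a ``tail instruction'' $\tau$. The instruction $\tau$ processes $m,\dots,n$ in increasing order and sends each element either into the existing block with a given index (indices counted in the min-sorted order) or into a new block. With this encoding, the indices used by $\tau$ that refer to blocks of $B_r$ are at most $|B_r|$.

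Now set $A'=B_r$, and let $B'$ be obtained by applying $\tau$ to $A$ in place of $B_r$. This is well defined because $|B_r|\le |B|<i\le|A|$, so every index $\tau$ needs exists in $A$, and new-block indices are shifted consistently. Then $B'|_{[m-1]}=A$, and $|B'|=|A|+t$, where $t=|B|-|B_r|\ge 0$ is the number of new blocks created by $\tau$. Hence $|A'|+|B'|=|B_r|+|A|+t=|A|+|B|$, and $i\le|A|\le|B'|$. Send $(A,i,B)\mapsto(A',B',i)$.

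Injectivity within each case is clear: the good map is the identity, and the bad map is inverted by swapping back $A'$ with $B'|_{[m-1]}$ and keeping the tail. The main point is that the two images are disjoint, and this is the step I expect to need the most care. The relevant statistic of a target triple is
\[
\phi(A',B',j)=|B'|-|B'|_{[m-1]}|+|A'|.
\]
\begin{itemize}
\item For a bad image we have $B'|_{[m-1]}=A$, so $\phi=t+|B_r|=|B|<i=j$.
\item For a good image we have $j=i\le|A|\le |A|+(|B|-|B_r|)=\phi$.
\end{itemize}
So $j>\phi$ exactly on bad images, the images are disjoint, and the decoding is explicit. Finally, inserting $n+1$ into the $j$-th block of $B'$ gives the required element of $\Pi'_{n+1}$, which proves Theorem~\ref{thm:marked}.
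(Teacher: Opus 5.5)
Your proposal is correct and is essentially the paper's own injection. Your good case is the paper's case $i\le t$, and your bad case is exactly the paper's ``rename $m$ to $n+1$, move $m,\dots,n$ across, and swap'' construction: set $A'=B|_{[m-1]}$, transplant the tail $\{m,\dots,n\}$ onto $A$ by block index, append the all-new blocks, and put $n+1$ in block $i$. The only difference is your separating statistic $\phi$, which is valid but more than you need. The paper simply compares $j$ with $|A'|$, since good images have $j=i\le|A|=|A'|$ while bad images have $j=i>|B|\ge|B_r|=|A'|$.
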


\begin{corollary}
    Theorem \ref{thm:marked} implies Theorem \ref{thm:strong-q}.
\end{corollary}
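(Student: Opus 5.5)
The plan is to compose the injection of Theorem \ref{thm:marked} with the block-preserving bijection \eqref{eq:bijection} from the bijective proof of Proposition \ref{prop:quant-strong-q}. Write $\iota:\Pi'_m\times\Pi_n\to\Pi_{m-1}\times\Pi'_{n+1}$ for that injection and $\phi$ for the bijection
\[
(\Pi_m\times\Pi_n)\sqcup(\Pi_{m-1}\times\Pi'_{n+1})\longrightarrow(\Pi'_m\times\Pi_n)\sqcup(\Pi_{m-1}\times\Pi_{n+1}).
\]
Both maps preserve the total number of blocks. The idea is a Garsia--Milne style argument: $\phi$ sends $\Pi_m\times\Pi_n$ either into $\Pi_{m-1}\times\Pi_{n+1}$, where we stop, or into $\Pi'_m\times\Pi_n$. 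In the second case we apply $\iota$ to land in $\Pi_{m-1}\times\Pi'_{n+1}$, then apply $\phi$ again, and repeat.

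The bijection $\phi$ is simple enough that we can write the map down directly instead of iterating. Take a pair $(P,Q)\in\Pi_m\times\Pi_n$.
\begin{itemize}
\item If $P\in\Pi^*_m$, remove the block $\{m\}$ from $P$ and append $\{n+1\}$ to $Q$. This gives a pair in $\Pi_{m-1}\times\Pi^*_{n+1}$.
\item If $P\in\Pi'_m$, output $\iota(P,Q)\in\Pi_{m-1}\times\Pi'_{n+1}$.
\end{itemize}
So the map $\Pi_m\times\Pi_n\to\Pi_{m-1}\times\Pi_{n+1}$ is the disjoint union of the bijection $\Pi^*_m\times\Pi_n\to\Pi_{m-1}\times\Pi^*_{n+1}$ and the injection $\iota:\Pi'_m\times\Pi_n\to\Pi_{m-1}\times\Pi'_{n+1}$.

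Injectivity follows because the two pieces are injective and their images lie in disjoint sets: $\Pi_{n+1}=\Pi'_{n+1}\sqcup\Pi^*_{n+1}$, so we can tell which piece a pair came from by checking whether the second partition ends with the singleton block $\{n+1\}$. For block preservation, the first piece loses one block in $P$ and gains one in $Q$, and $\iota$ preserves the total by hypothesis. Explicitness is inherited from $\iota$.

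There is no real obstacle. The only point to check is that $\Pi^*_{n+1}$ and $\Pi'_{n+1}$ partition $\Pi_{n+1}$. This holds because $n+1$ is either alone in its block, and then that block is last in the ordering by minima, or it shares its block with another element. The same fact for $m$ gives the case split on $P$.
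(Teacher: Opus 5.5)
Your proposal is correct and matches the paper's proof. The required injection is the disjoint union of the block-moving bijection $\Pi^*_m\times\Pi_n\to\Pi_{m-1}\times\Pi^*_{n+1}$ with the injection of Theorem \ref{thm:marked}, and it is injective because $\Pi_{n+1}=\Pi'_{n+1}\sqcup\Pi^*_{n+1}$ keeps the two images disjoint; the Garsia--Milne framing is harmless but unnecessary, since the iteration stops after one round and gives exactly this union.
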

The corollary can be seen directly from the bijection in the proof of Proposition \ref{prop:quant-strong-q}. We nevertheless give the details, since Theorem \ref{thm:strong-q} is the main result of the paper.
\begin{proof}
    We have the corollary since
    \[
        \Pi_m\times\Pi_n = (\Pi'_m\sqcup \Pi^*_m)\times \Pi_n
    \]
    and
    \[
        \Pi_{m-1}\times\Pi_{n+1} = \Pi_{m-1} \times (\Pi'_{n+1}\sqcup \Pi^*_{n+1}),
    \]
    while there is already a bijection
    \[
        \Pi^*_m\times \Pi_n \leftrightarrow \Pi_{m-1}\times \Pi^*_{n+1}
    \]
    that preserves the total number of blocks (as already mentioned in the proof of Proposition \ref{prop:quant-strong-q}).
    As the two target sets are disjoint, the union of this bijection and the injection in Theorem \ref{thm:marked} is the required injection in Theorem \ref{thm:strong-q}.
\end{proof}

Let us mention a quantitative result on the log-convexity. In particular, if we let $K_n$ be the number of blocks in a uniformly random partition of $[n]$, the following result can be derived from Chern, Diaconis, Kane, and Rhoades \cite[Proposition 1]{chern2014closed}.
\begin{theorem}[Chern–Diaconis–Kane–Rhoades 2014]
\label{thm:quant-log-convex}
	For every $n\ge 1$,
	\[
		B_{n-1}B_{n+1} - (B_n)^2 = (B_{n-1})^2(1 + \Var(K_{n-1})).
	\]
\end{theorem}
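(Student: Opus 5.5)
The plan is to specialise Proposition~\ref{prop:quant-strong-q} to $m=n$ and $q=1$, and to express every quantity through moments of $K_{n-1}$. With $m=n$ and $q=1$ the proposition reads
\[
	B_{n-1}B_{n+1}-B_n^2 = T_{n-1}(1)\,T_n'(1)-T_{n-1}'(1)\,T_n(1),
\]
since $T_j(1)=B_j$. It remains to evaluate $T_n(1)$, $T_n'(1)$ and $T_{n-1}'(1)$ in terms of $B_{n-1}$ and the distribution of $K=K_{n-1}$. That distribution is $\Pr[K=k]=\left\{ {n-1 \atop k} \right\}/B_{n-1}$.

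The key tool is the standard recurrence $T_n(q)=q\,T_{n-1}(q)+q\,T_{n-1}'(q)$. It is the same decomposition $\Pi_n=\Pi^*_n\sqcup\Pi'_n$ used above: the element $n$ either forms the singleton block $\{n\}$, which gives $qT_{n-1}$, or joins one of the $k$ existing blocks of a partition of $[n-1]$, which gives $q\cdot kq^{k-1}$ summed, i.e.\ $qT_{n-1}'$. Differentiating once gives
\[
	T_n'(q)=T_{n-1}(q)+T_{n-1}'(q)+q\,T_{n-1}'(q)+q\,T_{n-1}''(q).
\]
Write $\mu=\mathbb{E}[K]$. At $q=1$ we have $T_{n-1}(1)=B_{n-1}$, $T_{n-1}'(1)=B_{n-1}\mu$ and $T_{n-1}''(1)=B_{n-1}\mathbb{E}[K(K-1)]$. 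Hence
\[
	T_n(1)=B_{n-1}(1+\mu),\qquad T_n'(1)=B_{n-1}\bigl(1+2\mu+\mathbb{E}[K^2]-\mu\bigr)=B_{n-1}\bigl(1+\mu+\mathbb{E}[K^2]\bigr).
\]

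Substituting into the specialised proposition gives
\[
	B_{n-1}^2\bigl(1+\mu+\mathbb{E}[K^2]\bigr)-B_{n-1}\mu\cdot B_{n-1}(1+\mu)=B_{n-1}^2\bigl(1+\mathbb{E}[K^2]-\mu^2\bigr)=B_{n-1}^2\bigl(1+\Var(K_{n-1})\bigr),
\]
which is the claim.

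There is no real obstacle. The only points needing care are the following.
\begin{itemize}
	\item The choice $m=n$ is allowed for every $n\ge 1$, as Proposition~\ref{prop:quant-strong-q} requires.
	\item The degenerate case $n=1$ behaves correctly. Here $K_0\equiv 0$, so $\Var(K_0)=0$, and the identity reduces to $B_0B_2-B_1^2=1$.
	\item The moments must be identified correctly with derivatives at $q=1$. In particular, the second derivative gives the factorial moment $\mathbb{E}[K(K-1)]$, not $\mathbb{E}[K^2]$.
\end{itemize}
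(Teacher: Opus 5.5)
Your proof is correct, but it takes a different route from the paper. Both arguments rest on the same decomposition $\Pi_n=\Pi'_n\sqcup\Pi^*_n$: you use it through Proposition~\ref{prop:quant-strong-q} at $q=1$, and the paper through the bijection $\Pi^*_n\times\Pi_n\leftrightarrow\Pi_{n-1}\times\Pi^*_{n+1}$.

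The difference is how the remaining term is evaluated. You compute $T_n(1)$ and $T_n'(1)$ algebraically from the recurrence $T_n=qT_{n-1}+qT_{n-1}'$ and its derivative. This amounts to the moment identities $B_n=B_{n-1}(1+\mathbb{E}K_{n-1})$ and $B_{n+1}=B_{n-1}(2+2\mathbb{E}K_{n-1}+\mathbb{E}K_{n-1}^2)$, close in spirit to the original Chern--Diaconis--Kane--Rhoades derivation. You do not even need Proposition~\ref{prop:quant-strong-q} as a separate input, since it is itself two applications of that recurrence.

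The paper instead specialises the injection of Theorem~\ref{thm:marked} to $m=n$ and characterises its image exactly, via the positions $j,k$ in $Q'$ of the blocks containing $n+1$ and $n$. It then counts the pairs in $\Pi_{n-1}\times\Pi'_{n+1}$ that the injection misses. Building these from $A,B\in\Pi_{n-1}$ with $u\le v$ blocks gives $(v-u)^2+1+[v>u]$ completions. Symmetrising yields $\frac12\sum_{u,v}\left\{{n-1\atop u}\right\}\left\{{n-1\atop v}\right\}[(u-v)^2+2]$, and $(u-v)^2$ averages to $2\Var(K_{n-1})$ over independent copies.

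Your argument is shorter and self-contained. The paper's argument buys an explicit combinatorial account of the deficit: the variance term comes from the $(v-u)^2$ ways to place $n$ and $n+1$ into the $v-u$ blocks of $B$ beyond index $u$. Measuring the injection's deficiency in this way is the stated purpose of that section.
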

We also mention that analytical techniques in \cite{alzer2019engel} allow us to present the difference $B_{n-1}B_{n+1} - (B_n)^2$ as infinite series with nonnegative terms.
In this article, our combinatorial approach recovers Theorem \ref{thm:quant-log-convex}.

In the following sections, we prove Theorem \ref{thm:marked} (which implies Theorem \ref{thm:strong-q}) and Theorem \ref{thm:quant-log-convex}.

\section{Proof of Theorem \ref{thm:marked}}
For each pair of partitions $(P,Q)$ in $\Pi'_m\times \Pi_n$, where $1\le m\le n$, we assign a unique pair of partitions $(P',Q')$ in $\Pi_{m-1}\times \Pi'_{n+1}$. We denote the blocks of $P,Q,P',Q'$ by $P=\{P_1,\dots,P_s\}$, $Q=\{Q_1,\dots,Q_t\}$, $P'=\{P'_1,\dots,P'_{s'}\}$ and $Q'=\{Q'_1,\dots,Q'_{t'}\}$. 

During the operations below, we do not change the order of pre-existing blocks, except that we may move some blocks from the end of a partition to the end of the other partition. The order is unchanged because no smallest element of a block gets removed, and no smaller element gets inserted. This fact also implies that the total number of blocks in the two partitions is preserved.

Let $i$ be the index of the block containing $m$ in $P$. We consider the following two cases: 
\begin{itemize}
	\item $i\le t$: 
		We remove $m$ from $P_i$ to obtain $P'$, and add $n+1$ to $Q_i$ to obtain $Q'$.

		Recovering $P,Q$ from a resulting pair $P',Q'$ is immediate: remove $n+1$ from $Q'_j$ and add $m$ to $P'_j$. The feature distinguishing this case from the other case is that the block $Q'_j$ containing $n+1$ satisfies $j\le s'$. 
        
	\item $i>t$: We rename the element $m$ in $P_i$ to $n+1$. After that, we move elements $m,\dots,n$ from $Q$ to $P$ as follows. (Note that in the end we exchange $P$ and $Q$.) For each $x\in \{m,\dots,n\}$, if the block $Q_j$ containing $x$ also contains an element other than $m,\dots,n$, we move $x$ from $Q_j$ to $P_j$. Since $m,\dots,n$ are the largest elements of $Q$, the remaining elements of $m,\dots,n$ (which have not been moved) constitute contiguous blocks at the end of $Q$. We move these blocks from $Q$ to the end of $P$. Now we exchange their roles and set $P'=Q$ and $Q'=P$. 

    Recovering $P,Q$ from a resulting pair $P',Q'$ is straightforward. For each $r\le s'$, we move every element of $Q'_r\cap\{m,\dots,n\}$ back to $P'_r$. After that, we identify the maximal sequence of blocks at the end of $Q'$ that are contained in $\{m,\dots,n\}$ and move them back to the end of $P'$. These blocks are uniquely identifiable, since every block originally coming from $P$ contains an element smaller than $m$. We then rename $n+1$ in $Q'$ to $m$. Finally, we swap their roles to obtain $P=Q'$ and $Q=P'$. 

		The feature distinguishing this case from the other case is that the block $Q'_j$ containing $n+1$ satisfies $j > s'$.
\end{itemize}

Thus the two cases have disjoint images, and the recovery procedures show that the map is injective.
This concludes the proof of Theorem \ref{thm:marked}.

\section{Proof of Theorem \ref{thm:quant-log-convex}}
This section continues the proof of Theorem \ref{thm:marked} and uses the notation from there.

Now we consider the case $m=n$ to prove Theorem \ref{thm:quant-log-convex}.

In the first case $i\le t$, the set of all pairs $(P',Q')$ so that the block $Q'_j$ containing $n+1$ satisfies $j\le s'$ is the image of the injection. Indeed, given any such pair $(P',Q')$, if the block $Q'_j$ contains $n+1$, then we remove $n+1$ from $Q'_j$ to obtain $Q$ and add $n$ to $P'_j$ to obtain $P$.

Unlike the first case, in the case $i>t$, the image of the injection is only a subset of all pairs $(P',Q')$ satisfying the condition that the block $Q'_j$ containing $n+1$ has $j > s'$. In particular, let $Q'_k$ be the block containing $n$. For a pair $(P',Q')$ in the image, we have either $k\le s'$, or $k=t'$ with $Q'_{t'}=\{n\}$. While the former case is clear, in the latter case $k=t'$, we need $j>s'+1$ as well. Indeed, in this case the original $Q$ has $t=s'+1$ blocks (as one block gets moved). Meanwhile, we have $i>t$ and $j=i$. Therefore, $j>s'+1$.

We show that either $k\le s'$, or $k=t'$ with $Q'_{t'}=\{n\}$ and $j>s'+1$, is also sufficient by recovering $P,Q$. 
If $k\le s'$, then we move $n$ from $Q'_k$ to $P'_k$. If $k=t'$ and $j>s'+1$, we move the block $\{n\}$ from the end of $Q'$ and append it to the end of $P'$. Finally, we rename $n+1$ in $Q'$ to $n$. We now exchange $P',Q'$ to obtain $P,Q$.

Therefore, the pairs $(P',Q')$ that are not covered by the injection are those with either $s'<k\le t'$ with $Q'_k\ne\{n\}$, or $k=t'$ with $Q'_{t'}=\{n\}$ and $j=s'+1$. To count them, we start with two partitions $A=\{A_1,\dots,A_u\}$ and $B=\{B_1,\dots,B_v\}$ in $\Pi_{n-1}$ so that $u\le v$. There are $\left\{{n-1\atop u}\right\}\left\{{n-1\atop v}\right\}$ choices. After that, we will add $n,n+1$ into $B$. ($A,B$ will play the role of $P',Q'$ later.) We count these pairs according to the following cases:
\begin{itemize}
	\item $s'<k\le t'$ with $Q'_k\ne\{n\}$ (and $j>s'$): If we do not add any new block, there are $v-u$ blocks to insert $n$ into, and the same $v-u$ blocks to insert $n+1$. If we add a new block, the only choice is the block $\{n,n+1\}$, as neither $n$ nor $n+1$ could stay in a singleton block.
	\item $k=t'$ with $Q'_{t'}=\{n\}$ and $j=s'+1$: This works only when $u<v$, for which there is only one choice of appending $\{n\}$ to $B$ and inserting $n+1$ into $B_{u+1}$.
\end{itemize}

Therefore, the difference $B_{n-1}B_{n+1} - (B_n)^2$ is
\[
	\sum_{0\le u\le v\le n-1} \left\{{n-1 \atop u}\right\} \left\{{n-1 \atop v}\right\} \left((u-v)^2 + 1 + [v>u]\right),
\]
where $[v>u]$ is $1$ if $v>u$ and $0$ otherwise.
It can be rewritten in a nicer way as
\[
	\frac{1}{2} \sum_{u=0}^{n-1} \sum_{v=0}^{n-1} \left\{{n-1 \atop u}\right\} \left\{{n-1 \atop v}\right\} [(u-v)^2 + 2] = (B_{n-1})^2(1 + \Var(K_{n-1})).
\]
The conclusion follows.
\bibliographystyle{unsrt}
\bibliography{touchard}
\end{document}